\numberwithin{equation}{section}
\newtheorem{theorem}{Theorem}[section]
\newtheorem{lemma}[theorem]{Lemma}
\newtheorem{corollary}[theorem]{Corollary}
\newtheorem{remark}[theorem]{Remark}
\def\XXint#1#2#3{{\setbox0=\hbox{$#1{#2#3}{\int}$ }
\vcenter{\hbox{$#2#3$ }}\kern-.555\wd0}}
\begin{document}
\fontsize{11}{13}\selectfont

\title[Stagnation-point Form Solutions of 2D Boussinesq]
{Blowup in Stagnation-point Form Solutions of the Inviscid\\ 2d Boussinesq Equations}

\author{Alejandro Sarria and Jiahong Wu}

\address{Department of Mathematics \\
University of Colorado at Boulder \\
Boulder, CO 80309-0395 USA \\
}
\email{alejandro.sarria@colorado.edu}

\address{Department of Mathematics \\ Oklahoma State University \\ Stillwater, OK 74078 USA}
\email{jiahong.wu@okstate.edu}


\subjclass[2010]{35B44, 35B65, 35Q31, 35Q35}

\keywords{2D Boussinesq, boundary blowup, stagnation-point similitude.}

\begin{abstract}
The 2d Boussinesq equations model large scale atmospheric and oceanic flows. Whether its solutions develop a singularity in finite-time remains a classical open problem in mathematical fluid dynamics. In this work, blowup from smooth nontrivial initial velocities in stagnation-point form solutions of this system is established. On an infinite strip $\Omega=\{(x,y)\in[0,1]\times\mathbb{R}^+\}$, we consider velocities of the form $u=(f(t,x),-yf_x(t,x))$, with scalar temperature\, $\theta=y\rho(t,x)$. Assuming $f_x(0,x)$ attains its global maximum only at points $x_i^*$ located on the boundary of $[0,1]$, general criteria for finite-time blowup of the vorticity $-yf_{xx}(t,x_i^*)$ and the time integral of $f_x(t,x_i^*)$ are presented. Briefly, for blowup to occur it is sufficient that $\rho(0,x)\geq0$ and $f(t,x_i^*)=\rho(0,x_i^*)=0$, while $-yf_{xx}(0,x_i^*)\neq0$. To illustrate how vorticity may suppress blowup, we also construct a family of global exact solutions. A local-existence result and additional regularity criteria in terms of the time integral of $\left\|f_x(t,\cdot)\right\|_{L^\infty([0,1])}$ are also provided.

\end{abstract}

\maketitle

\section{Introduction}
\label{sec:intro}

In this article we discuss regularity criteria for solutions of the initial value problem
\begin{equation}
\label{reduced}
\begin{cases}
f_{xt}+ff_{xx}-f_x^{2}+\rho=I(t),\qquad\quad\qquad\qquad &x\in[0,1],\,\, t>0,
\\
\rho_t+f\rho_x=\rho f_x\,, \qquad\quad &x\in[0,1],\,\, t>0,
\\
I(t)=\int_0^1{\rho\,dx}-2\int_0^1{f_x^2\,dx},\qquad &t>0,
\\
f(x,0)=f_0(x),\,\,\,\rho(x,0)=\rho_0(x),\qquad &x\in[0,1],
\end{cases}
\end{equation}
subject to either Dirichlet
\begin{equation}
\label{dbc}
\begin{split}
\qquad\qquad\qquad f(t,0)=f(t,1)=0,\qquad\quad\quad\quad \rho(t,0)=\rho(t,1)=0,
\end{split}
\end{equation}
or periodic boundary conditions
\begin{equation}
\label{pbc}
\begin{split}
f(t,0)=f(t,1),\quad\,\,\,f_x(t,0)=f_x(t,1),\quad\,\,\,\qquad\rho(t,0)=\rho(t,1).
\end{split}
\end{equation}

System \eqref{reduced}i)-iii) is obtained by imposing on the inviscid two-dimensional Boussinesq equations
\begin{equation}
\label{boussinesq}
\begin{cases}
u_t+(u\cdot\nabla)u=-\nabla p+\theta\,e_{2},
\\
\nabla\cdot u=0,
\\
\theta_t+u\cdot\nabla\theta=0
\end{cases}
\end{equation}
a \emph{stagnation-point similitude} velocity field on an infinitely long 2d channel $\Omega\equiv\{(x,y)\in[0,1]\times(0,+\infty)\}$. More particularly, due to incompressibility there exists a scalar stream function $\psi(t,x,y)$ such that $u=\nabla^{\perp}\psi=(\psi_y,-\psi_x)$. If we consider only stream functions of the form $\psi(t,x,y)=yf(t,x)$, then \eqref{reduced}i)-iii) arises from \eqref{boussinesq} with
\begin{equation}
\label{ansatz}
\begin{split}
u(t,x,y)=(f(t,x),-yf_x(t,x)),\qquad\,\quad\,\,\,\theta(t,x,y)=y\rho(t,x).
\end{split}
\end{equation}
In \eqref{boussinesq}, $u$ denotes the two-dimensional fluid velocity, $p$ the scalar pressure, $e_{2}$ the standard unit vector in the vertical direction, and $\theta$ represents either the temperature in the context of thermal convection, or the density in the modeling of geophysical fluids.

\vskip .1in

Note that periodicity \eqref{pbc}i), ii) of $f(t,x)$ results from periodicity (in $x$) of $u(t,x,y)$, i.e. $u(t,1,y)=u(t,0,y)$. For reasons that will be evident in \S\ref{uxbound}, whenever the periodic boundary condition \eqref{pbc} is under consideration, we will impose on the pressure $p(t,x,y)$ the boundary condition
\begin{equation}
\label{pressure}
p(t,1,y)=p(t,0,y)
\end{equation}
and assume $f(0,x)=f_0(x)$ satisfies the mean-zero condition
\begin{equation}
\label{meanzero}
\begin{split}
\int_0^1{f_0(x)\,dx}=0.
\end{split}
\end{equation}

The Boussinesq equations model large scale atmospheric and oceanic flows responsible for cold fronts and the jet stream (see e.g. \cite{gill} \cite{majda}). In addition, the Boussinesq equations also play an important role in the study of Rayleigh-Benard convection (see, e.g. \cite{drazin} \cite{constantin1}). Mathematically, the 2D Boussinesq equations serve as a lower-dimensional model of the 3D hydrodynamics equations and retain some key features, such as vortex stretching, of the 3D Euler equations. It is also well-known that (away from the axis of symmetry) the inviscid 2D Boussinesq equations are closely related to the Euler equations for 3D axisymmetric swirling flows (\cite{majdabertozzi}). The reader may refer to \cite{weinan} \cite{chae1} \cite{Wu}  for local existence results and blowup criteria for \eqref{boussinesq} and related models.

\vskip.1in

If $\theta\equiv0$, \eqref{boussinesq} reduces to the 2d incompressible Euler equations, while \eqref{reduced}i), ii) simplifies to
\begin{equation}
\label{reduced2d}
f_{xt}+ff_{xx}-f_x^2=-2\int_0^1{f_x^2\,dx}.
\end{equation}
Equation \eqref{reduced2d} is known as the inviscid Proudman-Johnson equation (\cite{proudman}). In \cite{Sarria}, a general solution formula for solutions of \eqref{reduced2d}, along with blowup and global-in-time criteria, were established (see \cite{Childress} \cite{Aconstantin} \cite{saxton} \cite{okamoto1} \cite{sarria1} for additional regularity results). Equation \eqref{reduced2d} is interesting in its own right from a mathematical perspective: it illustrates how the boundary conditions, more particularly periodic or Dirichlet boundary conditions, can either contribute to, or suppress, the formation of spontaneous singularities from smooth initial conditions in nonlinear evolution equations (\cite{Sarria}). Moreover, \eqref{reduced2d} appears as a reduced 1D model for the 3D inviscid primitive equations of large scale oceanic and atmospheric dynamics (\cite{cao1}), and is also related to the hydrostatic Euler equations (\cite{Wong} \cite{Kukavica}).

\vskip.1in

The term `stagnation-point similitude' arises from the observation that velocity fields of the form \eqref{ansatz}i) emerge from the modeling of flow near a stagnation point (\cite{stuart} \cite{ohkitani1} \cite{gibbon}). The study of solutions of the form \eqref{ansatz}i) appears to have started with Stuart (\cite{stuart2}); he considered solutions of the 3d incompressible Euler equations that had linear dependence in two variables $x$ and $z$, and showed that the resulting differential equations in the remaining independent variables $y$ and $t$ displayed finite time singular behavior. Since then, velocities of stagnation-point type have been used in the context of 3d Navier-Stokes and magneto-hydrodynamics equations (\cite{stuart2} \cite{constantin2} \cite{gibbon1} \cite{gibbon2}). Due to an infinite geometric structure in the $y$ direction, the velocity field \eqref{ansatz} possesses infinite energy when considered over the entire spatial domain $\Omega$; however, we believe that the analysis of reduced models such as \eqref{reduced} can provide valuable insights into the global regularity problem for the full 2d Boussinesq and the 3d axisymmetric Euler equations. For instance, recent numerical simulations (\cite{Luo}) indicate that solutions of the 3d axisymmetric Euler equations develop a singularity in finite time, precisely, at points where the velocity field has a stagnation point.

\vskip.1in

Below we summarize the main results of this paper.

\begin{theorem}
\label{local}
Consider the IBVP \eqref{reduced}-\eqref{dbc} (or \eqref{reduced} with \eqref{pbc} and \eqref{meanzero}). If\, $
f_0\in H^2([0,1])$,\, $f_0'\in L^\infty([0,1])$\, and\, $\rho_0\in H^1([0,1])$, then there
exists $T=T(\|f_0\|_{H^2}, \|f_0'\|_{L^\infty}, \|\rho_0\|_{H^1})>0$
such that \eqref{reduced} has a unique solution $(f,\rho)$
on $[0,T]$ satisfying
$$f\in C([0,T]; H^2),\qquad f_x\in C([0,T]; L^\infty),\qquad \rho\in C([0,T]; H^1).$$
Moreover, if
\begin{equation*}
\int_0^{T^*} \|f_x(t, \cdot)\|_{L^\infty} \,dt <+\infty,
\end{equation*}
then the local solution can be extended to $[0,T^*]$.
\end{theorem}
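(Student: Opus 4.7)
The plan is to prove Theorem \ref{local} by the standard two-step strategy of constructing local solutions via a Friedrichs mollification (or Galerkin truncation), then upgrading the continuation condition by a Beale--Kato--Majda style argument. I would treat $(f_x,\rho)$ as the genuine unknowns: $f$ is recovered by integration, using $f(t,0)=0$ under \eqref{dbc} or the preserved mean-zero condition \eqref{meanzero} under \eqref{pbc}, and then $I(t)$ becomes an explicit functional of $(f_x,\rho)$. For each $\epsilon>0$ the mollification reduces \eqref{reduced} to a locally Lipschitz ODE on $H^2\times H^1$, so the main task is uniform-in-$\epsilon$ bounds on a common time interval $[0,T]$.

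The a priori estimates are standard $H^2\times H^1$ energy estimates. Differentiating \eqref{reduced}i) and \eqref{reduced}ii) in $x$ and setting $\omega:=f_{xx}$ yields
\begin{equation*}
\omega_t + f\omega_x = f_x\omega - \rho_x, \qquad \rho_{xt} + f\rho_{xx} = \rho\,\omega.
\end{equation*}
Pairing these with $\omega$ and $\rho_x$ in $L^2$ and integrating by parts (boundary terms vanish under either \eqref{dbc} or \eqref{pbc}) gives
\begin{equation*}
\frac{d}{dt}\bigl(\|\omega\|_{L^2}^2 + \|\rho_x\|_{L^2}^2\bigr) \;\leq\; C\bigl(1+\|f_x\|_{L^\infty}+\|\rho\|_{L^\infty}\bigr)\bigl(\|\omega\|_{L^2}^2+\|\rho_x\|_{L^2}^2\bigr),
\end{equation*}
with parallel $L^2$ estimates for $f_x$ and $\rho$ closing the loop (Poincar\'e controls $\|f\|_{L^\infty}$ via $f(t,0)=0$ or the mean-zero condition). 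Using the one-dimensional embeddings $\|f_x\|_{L^\infty}\lesssim \|f\|_{H^2}$ and $\|\rho\|_{L^\infty}\lesssim \|\rho\|_{H^1}$, one obtains $dE/dt\lesssim (1+E)^{3/2}$ for the energy $E=\|f\|_{H^2}^2+\|\rho\|_{H^1}^2$, giving existence on $[0,T]$ with $T$ depending only on the stated norms of $f_0$ and $\rho_0$. The passage $\epsilon\to 0$ is by Aubin--Lions compactness, and uniqueness follows from a direct $L^2$ energy estimate on the difference of two solutions in the class stated.

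For the continuation criterion, the crucial point is that $\|\rho\|_{L^\infty}$ can be controlled by $\|f_x\|_{L^\infty}$ alone: since \eqref{reduced}ii) is a pure transport equation with multiplicative source, the flow of $f$ yields
\begin{equation*}
\|\rho(t,\cdot)\|_{L^\infty}\;\leq\; \|\rho_0\|_{L^\infty}\exp\!\Bigl(\int_0^t\|f_x(s,\cdot)\|_{L^\infty}\,ds\Bigr).
\end{equation*}
Substituting this back into the displayed energy inequality and invoking Gr\"onwall gives
\begin{equation*}
\|f(t,\cdot)\|_{H^2}^2+\|\rho(t,\cdot)\|_{H^1}^2 \;\leq\; C_0\exp\!\Bigl(C\int_0^t\bigl(1+\|f_x(s,\cdot)\|_{L^\infty}\bigr)\,ds\Bigr),
\end{equation*}
which remains finite on $[0,T^*)$ under the stated hypothesis. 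One then reapplies the local existence result at time $T^*$ to extend past it.

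The main technical obstacle I anticipate is not any individual estimate but the careful bookkeeping around the \emph{nonlocal} and \emph{boundary} structure: verifying that the chosen mollification respects either \eqref{dbc} or \eqref{pbc}--\eqref{meanzero}, that these constraints propagate along the approximate flow so that $I^\epsilon(t)$ passes cleanly to $I(t)$ in the limit, and that in the Lagrangian estimate for $\|\rho\|_{L^\infty}$ the characteristics of $f$ remain inside $[0,1]$ (they do: the endpoints $x=0,1$ are fixed points of the flow in the Dirichlet case, since $f$ vanishes there, and in the periodic case one works on the torus).
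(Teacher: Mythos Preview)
Your proposal is correct and follows essentially the same approach as the paper: standard $H^2\times H^1$ energy estimates for local existence (the paper's inequalities (2.4)--(2.7) are exactly your estimates on $\rho$, $\rho_x$, $f_x$, $f_{xx}$), and the Lagrangian bound $\|\rho(t)\|_{L^\infty}\le\|\rho_0\|_{L^\infty}\exp\bigl(\int_0^t\|f_x\|_{L^\infty}\bigr)$ fed back into those estimates for the continuation criterion. The only minor difference is that the paper includes $\|f_x\|_{L^\infty}$ explicitly in its energy functional $Y$ and controls it by a separate characteristic estimate on \eqref{reduced}i), whereas you close it via the 1D Sobolev embedding $H^2\hookrightarrow W^{1,\infty}$; both routes yield the same result.
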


\begin{theorem}
\label{main}
Consider the IVP \eqref{reduced} with nontrivial smooth initial data $f_0(x)$ and $\rho_0(x)$ satisfying the Dirichlet boundary condition \eqref{dbc}. Suppose $\rho_0(x)\geq0$ for all $x\in[0,1]$ and denote by $x_i^*$, $0\leq i\leq n$, the finite number of points in $[0,1]$ where $f_0'(x)$ attains its greatest positive value. If the $x_i^*$ are located \textsl{only} at the boundary, and at each $x_i^*$ the initial vorticity satisfies $f_0''(x_i^*)\neq0$, then there exists a finite $t^*>0$ such that
\begin{equation*}
\lim_{t\nearrow t^*}\int_0^t{f_x(s,x_i^*)\,ds}=+\infty,\qquad\qquad \lim_{t\nearrow t^*}|f_{xx}(t,x_i^*)|=+\infty.
\end{equation*}
In contrast, if $x_i^*\in[0,1]$, then there exist nontrivial $f_0(x)$ and $\rho_0(x)\geq0$ satisfying Dirichlet boundary condition \eqref{dbc}, or periodic boundary condition \eqref{pbc} with mean-zero \eqref{meanzero}, such that if the initial vorticity $f_0''(x)$ vanishes at $x_i^*$ for at least one $i$, then the corresponding solution of \eqref{reduced} persists for all time.
\end{theorem}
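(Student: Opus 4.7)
The plan is to reduce the blowup analysis at $x_i^*$ to a scalar inhomogeneous linear ODE via a Cole-Hopf substitution in Lagrangian coordinates, localize the first singularity at $x_i^*$ by a comparison argument that uses $\rho_0\geq 0$, and derive finite-time vanishing of the transformed variable; for the contrast statement, I exhibit an explicit family of stationary exact solutions falling in the complementary regime.

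Setting up the flow $\eta(t,\alpha)$ by $\dot\eta=f(t,\eta)$, $\eta(0,\alpha)=\alpha$, the Dirichlet conditions $f(t,0)=f(t,1)=0$ force $\eta(t,0)\equiv 0$ and $\eta(t,1)\equiv 1$, making each $x_i^*\in\{0,1\}$ a stationary characteristic. Writing $F(t,\alpha)=f_x(t,\eta)$ and $R(t,\alpha)=\rho(t,\eta)$, equations \eqref{reduced}i)-ii) yield $\dot F=F^2-R+I(t)$ and $\dot R=RF$; integrating the latter together with $\eta_\alpha=\exp\bigl(\int_0^t F\,ds\bigr)$ gives $R(t,\alpha)=\rho_0(\alpha)\,\eta_\alpha$. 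Substituting $\xi(t,\alpha):=1/\eta_\alpha$ then linearizes the system along each characteristic:
\[
\ddot\xi+I(t)\,\xi=\rho_0(\alpha),\qquad \xi(0,\alpha)=1,\quad \dot\xi(0,\alpha)=-f_0'(\alpha),
\]
with representation
\[
\xi(t,\alpha)=\phi_1(t)-f_0'(\alpha)\,\phi_2(t)+\rho_0(\alpha)\,\Psi(t),
\]
where $\phi_1,\phi_2$ solve $\ddot\phi+I\phi=0$ with Cauchy data $(1,0)$ and $(0,1)$, and $\Psi$ solves $\ddot\Psi+I\Psi=1$ with zero Cauchy data.

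On any interval of smoothness I would verify $\phi_2,\Psi>0$, after which, with $M_0:=f_0'(x_i^*)=\max f_0'$, the hypotheses $\rho_0\geq 0$, $\rho_0(x_i^*)=0$, and $f_0'(\alpha)\leq M_0$ yield the key comparison
\[
\xi(t,\alpha)-\xi(t,x_i^*)=(M_0-f_0'(\alpha))\,\phi_2(t)+\rho_0(\alpha)\,\Psi(t)\geq 0,
\]
so $x_i^*$ is the argmin of $\xi(t,\cdot)$ and the flow Jacobian $\eta_\alpha=1/\xi$ first diverges at $\alpha=x_i^*$, at time $t^*:=\inf\{t>0:\phi_1(t)=M_0\phi_2(t)\}$. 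The Wronskian identity $\phi_1\dot\phi_2-\dot\phi_1\phi_2\equiv 1$ gives $(\phi_1/\phi_2)'=-1/\phi_2^2<0$, so $\phi_1/\phi_2$ strictly decreases from $+\infty$. The main obstacle and most delicate step is producing a quantitative rate forcing this descent to pass the positive level $M_0>0$ in finite time ($M_0>0$ because $f_0\not\equiv 0$ combined with $\int_0^1 f_0'\,dx=0$); I plan to close this by contradiction, combining the continuation criterion of Theorem~\ref{local} with the mass identity $\int_0^1\xi(t,\alpha)^{-1}\,d\alpha\equiv 1$ (a consequence of $\int_0^1\eta_\alpha\,d\alpha=1$) to extract self-consistent bounds on $I(t)$ that preclude $t^*=\infty$.

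Once $w(t):=\int_0^t F(s,x_i^*)\,ds=-\log\xi(t,x_i^*)\to+\infty$ at $t^*<\infty$, the first blowup claim is settled. For the vorticity, differentiating \eqref{reduced}i) in $x$ and evaluating at the stationary characteristic $x_i^*$, where $\rho(t,x_i^*)\equiv 0$ gives $\rho_x(t,x_i^*)\equiv\rho_0'(x_i^*)$ (since $D_t\rho_x=\rho f_{xx}$ vanishes there), produces the linear ODE $\dot b=F(t,x_i^*)\,b-\rho_0'(x_i^*)$ for $b(t):=f_{xx}(t,x_i^*)$; its explicit solution
\[
b(t)=e^{w(t)}\Bigl(f_0''(x_i^*)-\rho_0'(x_i^*)\int_0^t e^{-w(s)}\,ds\Bigr),
\]
together with boundedness of $e^{-w(s)}=\xi(s,x_i^*)$ on $[0,t^*]$ and the hypothesis $f_0''(x_i^*)\neq 0$, forces $|b(t)|\to\infty$. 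For the contrasting global statement, I would exhibit the stationary exact solutions $f(t,x)=A_0\sin(2n\pi x)$, $\rho(t,x)\equiv 0$, for $A_0\in\mathbb{R}\setminus\{0\}$ and $n\in\mathbb{Z}^+$; a direct substitution confirms they solve \eqref{reduced} with $I(t)\equiv-(2n\pi A_0)^2$ and satisfy both boundary regimes (with $\int_0^1 f_0\,dx=0$ automatic). The derivative $f_0'(x)=2n\pi A_0\cos(2n\pi x)$ attains its global maximum at each $x=k/n$, $0\leq k\leq n$, and $f_0''(x)=-(2n\pi)^2 A_0\sin(2n\pi x)$ vanishes at every such point; taking $n=1$ under periodicity places $x_i^*=0\equiv 1$ on the torus, while $n\geq 2$ under Dirichlet yields interior maxima $x_i^*=k/n\in(0,1)$ for $1\leq k\leq n-1$. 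Since these solutions are time-independent and smooth, global persistence is immediate.
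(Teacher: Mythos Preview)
Your reduction to the linear ODE $\ddot\xi+I(t)\xi=\rho_0(\alpha)$ and the comparison showing that the minimum of $\xi(t,\cdot)$ sits at $x_i^*$ match the paper's approach exactly. Your treatment of the vorticity blowup via $\dot b=Fb-\rho_0'(x_i^*)$ is correct and in fact a bit cleaner than the paper's formula~\eqref{nasty2}.

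The genuine gap is the step you yourself flag as ``the main obstacle'': proving $t^*<\infty$. Your plan---contradiction via the continuation criterion and the mass identity---is not just vague; it cannot succeed as stated, because it never invokes the hypothesis $f_0''(x_i^*)\neq 0$. That hypothesis is \emph{essential} for finiteness of $t^*$, not merely for the vorticity conclusion. The paper's own Remark~\ref{noblow} and \S\ref{global} show that under all your other standing assumptions (Dirichlet, $\rho_0\geq 0$, maximum of $f_0'$ at the boundary) one can have $t^*=+\infty$ once $f_0''(x_i^*)=0$. So any argument for $t^*<\infty$ that does not exploit $f_0''(x_i^*)\neq 0$ is doomed. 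What the paper actually does is use the mass identity $\int_0^1\xi^{-1}d\alpha\equiv 1$ to write $\phi_1(t)=\int_0^1(1-\eta f_0'-\rho_0 g)^{-1}dx$, bound this below by $\int_0^1(1-\eta f_0')^{-1}dx$, and then---here is where the hypothesis enters---Taylor-expand $M_0-f_0'(x)\sim |f_0''(x_i^*)|\,|x-x_i^*|$ near the boundary to get the logarithmic estimate $\int_0^1(1-\eta f_0')^{-1}dx\sim -C\ln(\eta_*-\eta)$. Feeding this into $dt/d\eta=\phi_1^2$ gives $dt/d\eta\lesssim \ln^2(\eta_*-\eta)$, which is integrable on $[0,\eta_*)$, yielding the explicit upper bound~\eqref{upperbound} for $t^*$. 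Without the nonvanishing second derivative, the integral grows like $(\eta_*-\eta)^{-1/2}$ instead and the time integral diverges.

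A smaller point on the contrast statement: your stationary examples $f=A_0\sin(2n\pi x)$, $\rho\equiv 0$ do verify the literal claim, but note that (i) you need $A_0>0$ for the maxima of $f_0'$ to sit at $x=k/n$ as you assert, and (ii) the paper's construction in \S\ref{global} is more substantive, producing genuinely time-dependent global solutions with nontrivial $\rho_0=\sin^2(2\pi x)$, which better illustrates the role of $f_0''(x_i^*)=0$ in suppressing blowup from \emph{nontrivial} temperature data.
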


The outline for the remainder of the paper is as follows. In \S\ref{uxbound}, the local well-posedness of \eqref{reduced}-\eqref{dbc} (and \eqref{reduced} with \eqref{pbc} and \eqref{meanzero}) is established along with a regularity criterion in terms of the time integral of  $\left\|f_x(t,\cdot)\right\|_{L^\infty([0,1])}$. In \S \ref{boundary}, we prove the existence of general, nontrivial smooth initial conditions, satisfying Dirichlet boundary conditions \eqref{dbc}, for which the time integral of $f_x(t,x)$ blows up in finite time at the boundary. Moreover, we also show that this blowup implies either one-sided or two-sided blowup in the vorticity\footnote[1]{By two-sided blowup we mean simultaneous blowup to both positive and negative infinity.}. Our blowup criteria is local-in-space and relies both on initial velocities with a local profile characterized by the non-vanishing of $f_0''(x)$ at the boundary and non-negativity of the initial temperature $\rho_0(x)$. Due to the local nature of the blowup criteria, our results do not rule out the formation of finite-time singularities either in the interior of the domain or at the boundary if $f_0$ possesses a different local structure. Thus, in \S \ref{global} we follow an argument similar to that in \cite{Childress} to construct a family of global solutions of \eqref{reduced} which provides valuable insights on the type of initial conditions needed to suppress finite-time blowup. The reader may then refer to \S \ref{conclusions} for concluding remarks.

\section{Local Well-posedness and Regularity Criteria}
\label{uxbound}

This section presents a regularity criterion which,
together with Theorem
\ref{blowup1} of \S\ref{boundary}, states that a finite time singularity of \eqref{reduced}-\eqref{dbc} (or \eqref{reduced} with \eqref{pbc} and \eqref{meanzero}) develops if and
only if the time integral of $f_x$ becomes infinity in a finite time. In addition, the local well-posedness of both boundary value problems is also presented.

\begin{theorem}
\label{Criterion}
Consider the IVP \eqref{reduced}.
Assume $f_0$ and $\rho_0$ satisfy either the Dirichlet boundary condition (\ref{dbc}), or the periodic boundary condition (\ref{pbc}) with mean-zero condition \eqref{meanzero}, and suppose
$$
f_0\in H^2([0,1]), \quad f_0'\in L^\infty([0,1]), \quad \rho_0\in H^1([0,1]).
$$
Then there
exists $T=T(\|f_0\|_{H^2}, \|f_0'\|_{L^\infty}, \|\rho_0\|_{H^1})>0$
such that \eqref{reduced} has a unique solution $(f,\rho)$
on $[0,T]$ satisfying $f\in C([0,T]; H^2)$,
$f_x\in C([0,T]; L^\infty)$ and $\rho\in C([0,T]; H^1)$.
Moreover, if
\begin{equation} \label{Cricon}
\int_0^{T^*} \|f_x(t, \cdot)\|_{L^\infty} \,dt <+\infty,
\end{equation}
then the local solution can be extended to $[0,T^*]$.
\end{theorem}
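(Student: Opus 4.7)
The plan follows the standard program for 1D quasilinear systems: (i) close a priori estimates in $H^2 \times H^1$ on a short time interval, (ii) construct approximate solutions by mollifying the initial data, (iii) pass to the limit, (iv) obtain uniqueness from an $L^2$ energy estimate on differences, and (v) derive the continuation criterion as a consequence of the same a priori bounds.

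The key a priori estimates come from three equations. First, along the flow $\dot\eta = f(t,\eta)$ one has $\tfrac{D\rho}{Dt} = \rho f_x$, so
$$\|\rho(t,\cdot)\|_{L^\infty} \leq \|\rho_0\|_{L^\infty}\,e^{A(t)},\qquad A(t) := \int_0^t \|f_x(s,\cdot)\|_{L^\infty}\,ds.$$
Second, \eqref{reduced}i) gives the Riccati-type relation $\tfrac{D f_x}{Dt} = f_x^2 - \rho + I(t)$ along these characteristics, with $|I(t)| \leq \|\rho\|_{L^\infty} + 2\|f_x\|_{L^2}^2$; the identity $\tfrac{d}{dt}\|f_x\|_{L^2}^2 = 3\int f_x^3\,dx - 2\int f_x\rho\,dx$ (in which the would-be term $2I(t)\int_0^1 f_x\,dx$ vanishes because $\int_0^1 f_x\,dx = 0$ under either \eqref{dbc} or \eqref{pbc}) then controls $\|f_x\|_{L^2}^2$ by Gronwall in $A(t)$. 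Third, differentiating \eqref{reduced}i) in $x$ gives the vorticity equation
$$\omega_t + f\,\omega_x - f_x\,\omega + \rho_x = 0,\qquad \omega := f_{xx},$$
while differentiating \eqref{reduced}ii) yields $\rho_{xt} + f\,\rho_{xx} = \rho\,\omega$. Testing against $\omega$ and $\rho_x$, integrating by parts (the boundary terms vanish under both \eqref{dbc} and \eqref{pbc}), and using the 1D embedding $H^1 \hookrightarrow L^\infty$ yields
$$\frac{d}{dt}\bigl(\|\omega\|_{L^2}^2+\|\rho_x\|_{L^2}^2\bigr) \leq C\bigl(1+\|f_x\|_{L^\infty}+\|\rho\|_{H^1}\bigr)\bigl(\|\omega\|_{L^2}^2+\|\rho_x\|_{L^2}^2\bigr).$$

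For existence I would mollify $f_0,\rho_0$ to smooth data and solve the regularized system (alternatively, iterate on the linearization $\rho^{(n+1)}_t + f^{(n)}\rho^{(n+1)}_x = \rho^{(n+1)}f^{(n)}_x$ and $f^{(n+1)}_{xt} + f^{(n)}f^{(n+1)}_{xx} - f^{(n)}_x f^{(n+1)}_x + \rho^{(n+1)} = I^{(n+1)}$). The bounds above are uniform in the regularization on an interval $[0,T]$ with $T$ depending only on $\|f_0\|_{H^2}$, $\|f_0'\|_{L^\infty}$, $\|\rho_0\|_{H^1}$, and compactness delivers a limit with the claimed regularity. Uniqueness comes from writing the system satisfied by $\delta f = f_1 - f_2$, $\delta\rho = \rho_1 - \rho_2$ (with zero data), estimating at the $L^2$ level to obtain $\tfrac{d}{dt}(\|\delta f_x\|_{L^2}^2 + \|\delta\rho\|_{L^2}^2) \leq C(t)(\|\delta f_x\|_{L^2}^2 + \|\delta\rho\|_{L^2}^2)$, and applying Gronwall.

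For the continuation criterion, assume $A(T^*) = M < \infty$. Then the transport identity gives $\|\rho\|_{L^\infty} \leq \|\rho_0\|_{L^\infty}e^M$ on $[0,T^*)$, the Gronwall bound on $\|f_x\|_{L^2}^2$ stays finite, and so $|I(t)|$ is bounded. Using $|f_x|^2 \leq \|f_x\|_{L^\infty}\cdot|f_x|$ in the Riccati equation converts it into a linear-in-$|f_x|$ differential inequality along each characteristic, and a Gronwall argument driven by $A$ produces a uniform bound on $\|f_x(t,\cdot)\|_{L^\infty}$ on $[0,T^*)$. The $\omega$–$\rho_x$ system then closes, so $\|f\|_{H^2}$, $\|f_x\|_{L^\infty}$, $\|\rho\|_{H^1}$ remain bounded up to $T^*$; restarting local existence at $T_0 < T^*$ sufficiently close to $T^*$ then extends the solution past $T^*$. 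The main obstacle throughout is the quadratic nonlinearity $f_x^2$: a naïve bound traps one in a Riccati ODE that cannot be propagated across $[0,T^*]$ without additional input. The saving observation is precisely $|f_x|^2 = \|f_x\|_{L^\infty}\cdot|f_x|$ along characteristics, which converts the Riccati inequality into a linear one whose driving coefficient is the assumed integrable quantity.
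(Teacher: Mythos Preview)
Your proposal is correct and follows essentially the same program as the paper: energy estimates on $\|\rho\|_{H^1}$, $\|f_x\|_{L^2}$, $\|f_{xx}\|_{L^2}$, combined with the Lagrangian bound $\|\rho\|_{L^\infty}\le\|\rho_0\|_{L^\infty}e^{A(t)}$ and the linearized-Riccati control of $\|f_x\|_{L^\infty}$. The only organizational difference is the order in which the continuation criterion is closed. The paper first feeds the $\|\rho\|_{L^\infty}$ bound and the assumed integrability of $\|f_x\|_{L^\infty}$ directly into a combined inequality of the form
\[
\frac{d}{dt}\int_0^1(\rho^2+\rho_x^2+f_x^2+f_{xx}^2)\,dx \le C\bigl(1+\|f_x\|_{L^\infty}+\|\rho\|_{L^\infty}\bigr)\int_0^1(\rho^2+\rho_x^2+f_x^2+f_{xx}^2)\,dx,
\]
applies Gronwall, and only afterwards recovers the $L^\infty$ bound on $f_x$. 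You instead bound $\|f_x\|_{L^2}$, then $|I(t)|$, then $\|f_x\|_{L^\infty}$ via the Riccati trick, and finally close the $(\omega,\rho_x)$ system; both orderings work.

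One small point worth making explicit: in your displayed inequality you wrote the coefficient as $C(1+\|f_x\|_{L^\infty}+\|\rho\|_{H^1})$, having already applied $H^1\hookrightarrow L^\infty$. That form is fine for the local-existence closure, but for the continuation step you should revert to the pre-embedding version with $\|\rho\|_{L^\infty}$ in the coefficient (as the paper does), since otherwise $\|\rho_x\|_{L^2}$ appears on both sides and the Gronwall is nonlinear. Your narrative in the final paragraph makes clear you intend this; just state it. Also, the ``saving observation'' should read $|f_x|^2\le\|f_x\|_{L^\infty}\,|f_x|$ rather than with equality.
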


Recall that the global regularity problem for the 2d inviscid Boussinesq equations \eqref{boussinesq} with arbitrary `smooth enough' initial data is currently open. Local solutions can be extended into global ones if
either one of the criteria,
$$
\int_0^{\infty}{\left\|\nabla u\right\|_{\infty}dt}<+\infty\qquad\text{or}\qquad \int_0^{\infty}{\left\|\nabla \theta\right\|_{\infty}dt}<+\infty
$$
holds. The criterion in Theorem \ref{Criterion} reflects the criterion
in terms of the velocity field $u$ for the 2d Boussinesq equations. There is no criterion corresponding
to the one on $\theta$ for \eqref{reduced}-\eqref{dbc}, namely no criterion in terms of $\rho$. The main reason is that \eqref{reduced}-\eqref{dbc} could still blow up in a finite time even if
$\rho \equiv 0$.

\vskip .1in

Before proving Theorem \ref{Criterion}, note that in the periodic case, the pressure boundary condition \eqref{pressure} and the mean-zero assumption \eqref{meanzero} imply that
\begin{equation}
\label{meanzerof}
\begin{split}
\int_0^1{f(t,x)\,dx}\equiv0
\end{split}
\end{equation}
for as long as $f$ is defined. This is a consequence of integrating the horizontal component of \eqref{boussinesq}i), which for solutions of the form \eqref{ansatz} reduces to
\begin{equation}
\label{horizontal}
\begin{split}
f_t+ff_x=-p_x.
\end{split}
\end{equation}

We now state and prove the following elementary lemma.

\begin{lemma}\label{ele}
Assume $f$ satisfies the Dirichlet boundary condition (\ref{dbc}), or the periodic boundary condition (\ref{pbc}) with mean-zero condition \eqref{meanzero}.
Suppose $f_x\in L^2([0,1])$. Then, for a constant $C$,
$$
\|f\|_{L^\infty([0,1])} \le C\,\|f'\|_{L^2([0,1])}.
$$
In particular, $\|f\|_{L^2([0,1])} \le C\,\|f'\|_{L^2([0,1])}$.
\end{lemma}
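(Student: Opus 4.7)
The plan is to treat the Dirichlet and the periodic/mean-zero cases in parallel by first locating a point where $f$ vanishes, and then recovering $f$ as an integral of $f'$ and applying Cauchy--Schwarz. Because $f' \in L^2([0,1])$, the function $f$ is absolutely continuous on $[0,1]$ (after modification on a null set), so pointwise values and the fundamental theorem of calculus are available.

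In the Dirichlet case, the vanishing point is given for free: $f(0)=0$. In the periodic mean-zero case, I would argue that $\int_0^1 f\,dx = 0$ together with the continuity of $f$ (an AC function) forces $f$ to take the value zero at some $x_0 \in [0,1]$; otherwise $f$ would be of one sign on $[0,1]$, contradicting the vanishing mean. In either case I obtain a point $x_0 \in [0,1]$ with $f(x_0)=0$.

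Given $x_0$ with $f(x_0)=0$, I would write, for arbitrary $x \in [0,1]$,
\begin{equation*}
f(x) \;=\; \int_{x_0}^{x} f'(s)\,ds,
\end{equation*}
and apply the Cauchy--Schwarz inequality to get
\begin{equation*}
|f(x)| \;\le\; |x-x_0|^{1/2}\,\|f'\|_{L^2([0,1])} \;\le\; \|f'\|_{L^2([0,1])}.
\end{equation*}
Taking the supremum in $x$ yields $\|f\|_{L^\infty([0,1])} \le \|f'\|_{L^2([0,1])}$, which is the first claim with $C=1$. The $L^2$ bound then follows immediately from the trivial inclusion $\|f\|_{L^2([0,1])} \le \|f\|_{L^\infty([0,1])}$ on the unit interval.

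There is no real obstacle here; the only point that requires mild care is the justification, in the periodic case, of the existence of a zero of $f$ from the mean-zero hypothesis, which is settled by the continuity of the $H^1$ representative on $[0,1]$.
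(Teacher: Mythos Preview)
Your argument is correct. In the Dirichlet case you and the paper do exactly the same thing: integrate $f'$ from the boundary zero and apply Cauchy--Schwarz. In the periodic, mean-zero case the paper instead expands $f$ in a Fourier series and uses $\widehat{f}(0)=0$ to bound $\|f\|_{L^\infty}$ by $C\bigl(\sum_{k\neq 0}|k|^2|\widehat{f}(k)|^2\bigr)^{1/2}=C\|f'\|_{L^2}$. Your route---extract a zero $x_0$ from continuity plus vanishing mean, then repeat the integral estimate---is more elementary, avoids Fourier analysis altogether, and yields the explicit constant $C=1$ in both cases, which the Fourier argument does not. The only minor point to note is that the hypothesis ``mean-zero condition \eqref{meanzero}'' literally refers to $f_0$; both you and the paper tacitly use that this propagates to $\int_0^1 f(t,x)\,dx=0$, which is recorded just above the lemma as \eqref{meanzerof}.
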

\begin{proof} The proof is simple. In the case
of the Dirichlet boundary condition,
$$
|f(x)| = \left|\int_0^x f'(y)\,dy \right| \le \|f'\|_{L^2([0,1])}.
$$
In the case of the periodic boundary condition, we write
$$
f(x) =\sum_{k} \widehat{f}(k)\,e^{ix k}, \qquad
\widehat{f}(k) =\int_0^1 e^{-ikx}\, f(x)\,dx.
$$
Thus, using \eqref{meanzerof}, we obtain

$$
\|f\|_{L^\infty} \le C\, \left[\sum_{k\not =0}|k|^2 \,|\widehat{f}(k)|^2\right]^{1/2} = C\, \|f'\|_{L^2}.
$$
This proves Lemma \ref{ele}.\hfill $\square$
\end{proof}

\vskip .1in

\begin{proof}
The local well-posedness can be obtained through an approximation procedure
(see, e.g, \cite{majdabertozzi}). For the sake of brevity, we shall just
provide the key component of this procedure, namely the local bound for $\|f\|_{H^2} + \|\rho\|_{H^1}$. In order to establish the desired local
bound, we consider the norm
\begin{equation} \label{Yt}
Y^2(t) \equiv \|\rho(t,\cdot)\|_{H^1}^2 + \|f_x(t,\cdot)\|^2_{L^2}
+ \|f_x(t,\cdot)\|_{L^\infty} + \|f_{xx}(t,\cdot)\|^2_{L^2}
\end{equation}
and show that
\begin{equation} \label{Yb}
Y^2(t) \le Y^2(0) + C\, \int_0^t (Y^2(\tau) + Y^3(\tau) + Y^4(\tau))\,d\tau.
\end{equation}
Gronwall's inequality then implies that, for some $T=T(Y(0))>0$ and
$t\in [0,T]$,
$$
Y(t) <\infty.
$$
This also gives a local bound for $\|f\|_{L^2}$ due to
Lemma \ref{ele}. We remark that $\|f_x(t,\cdot)\|_{L^\infty}$
is included in $Y$ because it appears to be more convenient to obtain
a ``closed" differential inequality by considering this norm
simultaneously. We now prove (\ref{Yb}) through energy estimates. Taking the inner product of \eqref{reduced}ii)  with $\rho$ and integrating by parts, we have
\begin{equation}
\label{ineq01}
\frac{d}{dt}\int_0^1{\rho^2\,dx}=3\int_0^1{\rho^2 f_x\,dx}
\leq
3\,\|f_x\|_{L^\infty}\, \int_0^1{\rho^2\,dx}.
\end{equation}
Taking $\partial_x$ of \eqref{reduced}ii), dotting with $\partial_x\,\rho$, integrating by parts and applying Lemma \ref{ele}, we obtain
\begin{equation}
\label{ineq02}
\begin{split}
\frac{d}{dt}\int_0^1{\rho_{x}^2\,dx}
=\int_0^1{f_x\,\rho_x^2\,dx}+2\int_0^1{\rho\rho_xf_{xx}\,dx}
&\leq \|f_x\|_{L^\infty}\,\int_0^1{\rho_x^2\,dx}+
\|\rho\|_{L^\infty}\,\int_0^1({\rho_x^2 + f_{xx}^2)\,dx}
\\
&\leq \|f_x\|_{L^\infty}\,\|\rho_x\|_{L^2}^2 + C\,\|\rho_x\|_{L^2}\,
(\|\rho_x\|_{L^2}^2 + \|f_{xx}\|_{L^2}^2).
\end{split}
\end{equation}
We remark that, in the case of periodic boundary conditions, we use 
$$
\|\rho\|_{L^\infty} \le C (\|\rho\|_{L^2} + \|\rho_x\|_{L^2})
$$
instead of Lemma \ref{ele} to avoid the mean-zero assumption on $\rho$. 
This inequality holds without $\rho$ being mean-zero in the periodic case. 
Dotting \eqref{reduced}i) with $f_x$ and using \eqref{dbc} or \eqref{pbc}, we find
\begin{equation}
\label{ineq03}
\begin{split}
\frac{d}{dt}\int_0^1{f_{x}^2\,dx}= 3 \int_0^1f_{x}^3\,dx -2\,\int_0^1 \rho\, f_x\,dx
\le 3 \, \|f_x\|_{L^\infty}\, \|f_x\|_{L^2}^2
+ \|\rho\|_{L^2}^2 + \|f_x\|_{L^2}^2.
\end{split}
\end{equation}
Similarly,
\begin{equation}
\label{ineq04}
\frac{d}{dt}\int_0^1{f_{xx}^2\,dx} \le  (3 \|f_x\|_{L^\infty}\,
+ 1)\int_0^1f_{xx}^2\,dx +  \|\rho\|_{L^2}^2.
\end{equation}
Now define the Lagrangian path $\gamma(t,x)$ via the initial value problem
\begin{equation}
\label{charac}
\begin{split}
\dot\gamma(t,x)=f(t,\gamma(t,x)),\qquad\qquad\gamma(0,x)=x,
\end{split}
\end{equation}
where $\cdot\equiv\frac{d}{dt}$. Invoking (\ref{charac}) in \eqref{reduced}i), taking the $L^\infty$-norm
and using Lemma \ref{ele}, we have
\begin{equation}
\label{ineq05}
\begin{split}
\|f_x(t,\cdot)\|_{L^\infty} &\le \|f'_{0}\|_{L^\infty} + \int_0^t (\|\rho\|_{L^\infty} + \|f_x\|^2_{L^\infty} + I(\tau))\,d\tau
\\
&\le \|f'_{0}\|_{L^\infty} + \int_0^t (\|\rho_x\|_{L^2}^2 + \|f_x\|^2_{L^\infty} + \|\rho\|_{L^2} + 2 \|f_x\|_{L^2}^2)\,d\tau.
\end{split}
\end{equation}
It is then easy to see that combining (\ref{ineq01}) through
(\ref{ineq05}) yields the desired inequality in (\ref{Yb}). This
completes the local well-posedness part. To prove the regularity criterion,  it suffices to show that (\ref{Cricon}) implies
the bound
\begin{equation}\label{fH2}
f \in L^\infty([0, T^*]; H^2), \quad
f_x \in L^\infty([0, T^*]; L^\infty)\quad\text{and} \quad \rho\in L^\infty([0, T^*]; H^1).
\end{equation}
Adding the inequalities in (\ref{ineq01}) through
(\ref{ineq04}) yields
\begin{equation}
\label{ineq06}
\begin{split}
\frac{d}{dt}\int_0^1 (\rho^2 + \rho_x^2 + f_x^2 + f_{xx}^2)\,dx
&\le C\,(1+ \|f_x\|_{L^\infty})\,\int_0^1 (\rho^2 + \rho_x^2 + f_x^2 + f_{xx}^2)\,dx
\\
&\quad  + \|\rho\|_{L^\infty}\,\int_0^1({\rho_x^2 + f_{xx}^2)\,dx}.
\end{split}
\end{equation}
Invoking (\ref{charac}) in \eqref{reduced}ii) and
taking the $L^\infty$-norm, we have
\begin{equation} \label{rhoin}
\|\rho(t, \cdot)\|_{L^\infty}\, \le \|\rho_0\|_{L^\infty}\,e^{\int_0^t \|f_x\|_{L^\infty}\,d\tau}.
\end{equation}
Combining (\ref{Cricon}), (\ref{ineq06}) and (\ref{rhoin}) leads to
$$
f_x, f_{xx} \in L^\infty([0, T^*]; L^2) \quad\text{and} \quad \rho\in L^\infty([0, T^*]; H^1).
$$
Lemma \ref{ele} also yields $f\in L^\infty([0, T^*]; L^2)$. Furthermore, applying Gronwall's inequality to (\ref{ineq05}) leads to
$$
f_x \in L^\infty([0, T^*]; L^\infty).
$$
This establishes (\ref{fH2}). We have thus completed the proof of Theorem
\ref{Criterion}.\hfill $\square$
\end{proof}

\section{Blowup}
\label{boundary}

In this section we prove the existence of solutions to \eqref{reduced}, satisfying Dirichlet boundary conditions \eqref{dbc}, which blowup in finite time from nontrivial smooth initial data. Our blowup criteria is in terms of an arbitrary nonnegative initial temperature $\rho_0$ and the local profile of a nontrivial initial velocity $f_0$ near the boundary. More particularly, note that the vorticity associated to the velocity field \eqref{ansatz} is given, after a slight abuse of notation, by
\begin{equation}
\label{vorticity}
\nabla\times u=-yf_{xx}(t,x),
\end{equation}
so that we may refer to $f_0''(x)$ as the initial vorticity. We examine how the global regularity of solutions of \eqref{reduced} is affected by both, the corresponding boundary condition and the (non)vanishing of the initial vorticity at points where $f_0'(x)$ attains its maximum. Briefly, using \eqref{reduced}ii) and \eqref{charac}, we first write \eqref{reduced}i) as a linear second-order, non-homogeneous ode in terms of $\gamma_x^{-1}$. Then,  a ``conservation in mean'' condition for $\gamma_x$ will allow us to solve this differential equation and obtain an implicitly defined representation formula for $\gamma_{x}$. The blowup is then established by deriving lower bounds on $\gamma_x$ which depend on the profile of $f_0$ near the boundary. Lastly, using a representation formula for $f_{xx}(t,\gamma(t,x))$ in terms of $\gamma_x$, we prove blowup of the vorticity \eqref{vorticity}. We begin by establishing some preliminary results.

\vskip.1in

Note that the classical existence and uniqueness result for odes (as applied to the IVP \eqref{charac}), along with Dirichlet or periodic boundary conditions, implies that
\begin{equation}
\label{fixed}
\gamma(t,0)\equiv0,\qquad\quad\gamma(t,1)\equiv1
\end{equation}
or respectively
\begin{equation}
\label{fixedpbc}
\gamma(t,x+1)-\gamma(t,x)\equiv1,
\end{equation}
for as long as a solution exists. In either case, the mean of $\gamma_x$ over $[0,1]$ is preserved in time:
\begin{equation}
\label{meanone}
\int_0^1{\gamma_{x}\,dx}\equiv1.
\end{equation}
Now, differentiating \eqref{charac} with respect to $x$ yields
\begin{equation}
\label{jacobian0}
\dot\gamma_{x}=f_x(t,\gamma(t,x))\,\gamma_{x}\,,
\end{equation}
which we integrate to obtain
\begin{equation}
\label{uxjac}
\gamma_{x}(t,x)=\text{exp}\left(\int_0^t{f_x(s,\gamma(s,x))\,ds}\right).
\end{equation}
But using \eqref{charac}i) and \eqref{uxjac} on equation \eqref{reduced}ii), we find that
\begin{equation}
\label{rho}
\begin{split}
\rho(t,\gamma(t,x))&=\rho_0(x)\,\gamma_{x}(t,x).
\end{split}
\end{equation}
Then differentiating \eqref{jacobian0} with respect to time and using (\ref{reduced})i) and \eqref{rho}, yields
\begin{equation}
\label{der2}
I(t)-\rho_0\,\gamma_{x}=-\gamma_{x}\,\left( \gamma_{x}^{-1}\right)^{\ddot{}}.
\end{equation}

Setting $\omega=\gamma_x^{-1}$ in \eqref{der2} now gives
\begin{equation}
\label{ode0}
\ddot\omega(t,x)+I(t)\omega(t,x)=\rho_0(x),
\end{equation}
a second-order linear, non-homogeneous ode parametrized by $x\in[0,1]$ and complemented by the initial values $ \omega(0,x)\equiv1$ and $\dot\omega(0,x)=-f_0'(x)$. We use variation of parameters to write down the form of its the general solution. 

\vskip .1in

First consider the associated homogeneous equation
\begin{equation}
\label{homo1}
\ddot\omega_h(t,x)+I(t)\omega_h(t,x)=0.
\end{equation}
Let $\phi_1(t)$ and $\phi_2(t)$ be two linearly independent solutions of \eqref{homo1} satisfying $\phi_1(0)=\dot\phi_2(t)=1$ and $\dot\phi_1(0)=\phi_2(0)=0$. Setting $\phi_2(t)=\eta(t)\phi_1(t)$ we obtain, via reduction of order, the general solution of \eqref{homo1} as
\begin{equation}
\label{omegah}
\begin{split}
\omega_h(t,x)
=c_1(x)\phi_1(t)+c_2(x)\phi_2(t)
=\phi_1(t)(c_1(x)+c_2(x)\eta(t)),
\end{split}
\end{equation}
where the strictly increasing function $\eta(t)$ satisfies
\begin{equation}
\label{eta}
\dot\eta(t)=\phi_1(t)^{-2},\qquad\quad\qquad\eta(0)=0.
\end{equation}
Next, following a standard variation of parameters argument, we look for a particular solution to \eqref{ode0} of the form
\begin{equation}
\label{omegap}
\omega_p(t,x)=v_1(t,x)\phi_1(t)+v_2(t,x)\phi_2(t),
\end{equation}
where $v_1$ and $v_2$ are to be determined. This yields
$$v_1(t,x)=a(x)-\rho_0(x)\int_0^t{\eta(s)\phi_1(s)\,ds},\qquad
v_2(t,x)=b(x)+\rho_0(x)\int_0^t{\phi_1(s)\,ds}$$
for arbitrary functions $a(x)$ and $b(x)$. The general solution of \eqref{ode0}i), $\omega=\omega_h+\omega_p$, now becomes
\begin{equation}
\label{gen0}
\omega(t,x)=\phi_1(t)\left[1-f_0'(x)\eta(t)-\rho_0(x)\left(\int_0^t{\eta(s)\phi_1(s)\,ds}-\eta(t)\int_0^t{\phi_1(s)\,ds}\right)\right],
\end{equation}
where we used the initial values for $\omega$, along with $\eta(0)=0$ and $\dot\eta(0)=1$, to obtain $c_1(x)+a(x)\equiv1$ and $c_2(x)+b(x)=-f_0'(x)$. Lastly, since $\gamma_x=\omega^{-1}$, the conservation of mean \eqref{meanone} and formula \eqref{gen0} imply that $\phi_1(t)$ satisfies the relation
\begin{equation}
\label{phi}
\phi_1(t)=\int_0^1{\left(1-\eta(t)f_0'(x)-\rho_0(x)g(t)\right)^{-1}dx},\quad\quad g(t)=\int_0^t{\eta(s)\phi_1(s)\,ds}-\eta(t)\int_0^t{\phi_1(s)\,ds},
\end{equation}
which yields the implicitly defined representation formula
\begin{equation}
\label{jacobian}
\gamma_{x}(t,x)=\left[\phi_1(t)\left({1-\eta(t)f_0'(x)-\rho_0(x)g(t)}\right)\right]^{-1}.
\end{equation}

Before proving Theorem \eqref{main}, we make the following observation.

\vskip .1in
Define the positive real number $\eta_*$ by
\begin{equation}
\label{eta*}
\eta_*=\frac{1}{M_0}\qquad\text{for}\qquad M_0\equiv\max_{x\in[0,1]}f_0'(x).
\end{equation}

\begin{lemma}
\label{phipos}
If\, $0\leq \eta<\eta_*$\, on\, $\Sigma\equiv[0,T)$\, for some\, $0<T\leq+\infty$,\, then\, $\phi_1(t)>0$\, on\, $\Sigma$. Additionally, if $\rho_0(x)\geq0$\, for all\, $x\in[0,1]$,\, then\, $0<\phi_1(t)<+\infty$\, for all\, $t\in\Sigma$.
\end{lemma}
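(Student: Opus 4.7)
The plan has two independent ingredients: a Wronskian-type argument for the ODE \eqref{homo1} to force positivity of $\phi_1$, and a sign/monotonicity analysis of the auxiliary function $g(t)$ to extract an upper bound on $\phi_1$ from the implicit formula \eqref{phi}.

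For the first assertion, since $\phi_1(0)=1>0$, continuity of $\phi_1$ as a $C^2$ solution of \eqref{homo1} gives a maximal interval $[0,T_1)\subseteq\Sigma$ on which $\phi_1>0$. Suppose for contradiction that $T_1<T$, so $\phi_1(T_1)=0$. Because \eqref{homo1} has no first-order term, the Wronskian $W(t)=\phi_1(t)\dot\phi_2(t)-\dot\phi_1(t)\phi_2(t)$ is constant, and the initial data $\phi_1(0)=\dot\phi_2(0)=1$, $\dot\phi_1(0)=\phi_2(0)=0$ give $W\equiv 1$. Evaluating at $t=T_1$ yields $-\dot\phi_1(T_1)\phi_2(T_1)=1$, so in particular $\phi_2(T_1)\neq 0$. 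However, by construction $\phi_2=\eta\phi_1$, and the hypothesis $\eta(T_1)<\eta_*<\infty$ forces $\phi_2(T_1)=\eta(T_1)\phi_1(T_1)=0$. This contradiction yields $T_1=T$, so $\phi_1>0$ throughout $\Sigma$.

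For the second assertion, assume in addition that $\rho_0\ge 0$. Differentiating the definition of $g$ from \eqref{phi} and using \eqref{eta} one finds
\[
\dot g(t)=-\dot\eta(t)\int_0^t \phi_1(s)\,ds,
\]
which is $\le 0$ on $\Sigma$ by the positivity just established, so $g(0)=0$ gives $g(t)\le 0$. Since $f_0'(x)\le M_0=1/\eta_*$ pointwise, the denominator in \eqref{phi} then satisfies
\[
1-\eta(t)\,f_0'(x)-\rho_0(x)\,g(t)\;\ge\;1-\eta(t)\,M_0\;>\;0
\]
uniformly in $x\in[0,1]$, for every $t\in\Sigma$. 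Integrating against \eqref{phi} gives the quantitative bound
\[
0\;<\;\phi_1(t)\;\le\;\bigl(1-\eta(t)\,M_0\bigr)^{-1}\;<\;+\infty,
\]
as required.

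The step needing the most care is the first: a priori, $\eta$ staying bounded below $\eta_*$ does \emph{not} by itself rule out $\phi_1$ touching zero, since $\dot\eta=\phi_1^{-2}$ can integrate to a finite value even when $\phi_1\to 0$ (e.g.\ $\phi_1\sim (T_1-t)^\alpha$ with $\alpha<1/2$). The contradiction must therefore be extracted from the particular structure of the reduction of order ($\phi_2=\eta\phi_1$) combined with the constancy of the Wronskian, rather than from direct estimates on $\eta$ alone.
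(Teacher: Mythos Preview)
Your proof is correct. For the first assertion, your Wronskian argument differs from the paper's one-line claim, which appeals only to the boundedness of $\eta$, the IVP \eqref{eta}, and $\phi_1(0)=1$. The implicit reasoning there is presumably that any zero of $\phi_1$ must be simple (else uniqueness for \eqref{homo1} would force $\phi_1\equiv 0$), so $\phi_1^{-2}$ is non-integrable near such a zero and $\eta\to+\infty$; your route via $W\equiv 1$ together with $\phi_2=\eta\phi_1$ is a clean alternative that sidesteps any discussion of the order of vanishing or integrability, and your closing remark correctly identifies why some structural input beyond ``$\eta$ bounded'' is required. For the second assertion the two arguments are essentially the same: the paper derives the identical sign $\dot g<0$ and the identical lower bound $1-\eta f_0'-\rho_0 g\ge 1-\eta f_0'>0$, then argues by contradiction (if $\phi_1\to+\infty$ then $\int_0^1(1-\eta f_0')^{-1}dx\to+\infty$, forcing $\eta\to\eta_*$), whereas you pass to the cruder pointwise bound $1-\eta f_0'\ge 1-\eta M_0$ and read off $\phi_1\le(1-\eta M_0)^{-1}$ directly. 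The paper's sharper intermediate inequality $\phi_1\le\int_0^1(1-\eta f_0')^{-1}dx$ is recorded because it is reused (as \eqref{phiineq}) in the proof of Theorem~\ref{blowup1}.
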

\begin{proof}
Let $0<T\leq+\infty$ be such that $\eta$, with $\eta(0)=0$ and $\dot\eta(0)=1$, satisfies $0\leq \eta<\eta_*$ for all $t\in\Sigma\equiv[0,T)$. The first part of the Lemma follows directly from the boundedness of $\eta$ on $\Sigma$, the IVP \eqref{eta}, and $\phi_1(0)=1$. Now, in addition to the above, suppose $\rho_0(x)\geq0$ for all $x\in[0,1]$, and assume there is $t_1\in\Sigma$ such that
\begin{equation}
\label{blowphi}
\lim_{t\nearrow t_1}\phi_1(t)=+\infty.
\end{equation}
Since $\phi_1>0$ on $\Sigma$, then
$$\dot g(t)=-\dot\eta(t)\int_0^t{\phi_1(s)\,ds}=-\phi_1(t)^{-2}\int_0^t{\phi_1(s)\,ds}<0$$
for all $t\in\Sigma$. This, along with $g(0)=0$ and $\rho_0(x)\geq0$, implies that, on $\Sigma$,
\begin{equation}
\label{posden}
1-\eta(t)f_0'(x)-\rho_0(x)g(t)\geq1-\eta(t)f_0'(x)>0
\end{equation}
for all $x\in[0,1]$. Consequently, \eqref{phi}i) yields
\begin{equation}
\label{phiineq}
\phi_1(t)^{-1}\geq\left(\int_0^1{\frac{dx}{1-\eta(t)f_0'(x)}}\right)^{-1}>0,\qquad\text{for all $t\in\Sigma$}.
\end{equation}
But using \eqref{blowphi} on \eqref{phiineq} we obtain
$$\lim_{t\nearrow t_1}\int_0^1{\frac{dx}{1-\eta(t)f_0'(x)}}=+\infty,\qquad t_1\in\Sigma,$$
and so $\lim_{t\nearrow t_1}\eta(t)=\eta_*$, contradicting our assumption that $0\leq\eta<\eta_*$ for all $t\in\Sigma$.\hfill $\square$
\end{proof}

We now establish the following blowup result.

\begin{theorem}
\label{blowup1}
Consider the IVP \eqref{reduced} for smooth nontrivial initial data $f_0(x)$ and $\rho_0(x)$ satisfying the Dirichlet boundary condition \eqref{dbc}. Suppose $\rho_0(x)\geq0$ for all $x\in[0,1]$ and assume $f_0'(x)$ attains its greatest value $M_0>0$ \emph{only} at boundary point(s) $x_i^*\in\{0,1\}$,\, $i=0,1$. If the initial vorticity $f_0''(x)$ is non-zero at each $x_i^*$, then there exists a finite time $t^*>0$ such that
\begin{equation}
\label{blowup}
\lim_{t\nearrow t^*}\int_0^t{f_x(s,x_i^*)\,ds}=+\infty.
\end{equation}
\end{theorem}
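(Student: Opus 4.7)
The plan is to evaluate the representation formula \eqref{jacobian} at the boundary point $x_i^*$ and show that the factor $\phi_1(t)(1-M_0\eta(t))$ in the denominator vanishes in finite time, yet $\gamma_x(t,x_i^*)$ blows up. First, by Dirichlet boundary conditions, $\rho_0(x_i^*)=0$ and $\gamma(t,x_i^*)\equiv x_i^*$ by \eqref{fixed}. Since $f_0'(x_i^*)=M_0$, evaluating \eqref{jacobian} at $x=x_i^*$ collapses to
\begin{equation*}
\gamma_x(t,x_i^*)=\bigl[\phi_1(t)\,(1-M_0\,\eta(t))\bigr]^{-1}.
\end{equation*}
On the other hand, \eqref{uxjac} together with $\gamma(t,x_i^*)=x_i^*$ gives $\gamma_x(t,x_i^*)=\exp\!\bigl(\int_0^t f_x(s,x_i^*)\,ds\bigr)$, so it suffices to exhibit a finite $t^*>0$ at which $\phi_1(t)(1-M_0\eta(t))\to 0^+$.

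The next step is to bound $\phi_1$ from above using the local structure near $x_i^*$. Since $g(t)\le 0$ (as noted in the proof of Lemma \ref{phipos}) and $\rho_0\ge 0$, the representation \eqref{phi} yields the upper bound
\begin{equation*}
\phi_1(t)\ \le\ \int_0^1\frac{dx}{1-\eta(t)\,f_0'(x)}.
\end{equation*}
Because $f_0'$ attains its maximum $M_0$ \emph{only} at the boundary points $x_i^*$ and $f_0''(x_i^*)\ne 0$, a Taylor expansion produces a constant $c>0$ such that $M_0-f_0'(x)\ge c\,|x-x_i^*|$ in a one-sided neighborhood of each $x_i^*$, while $M_0-f_0'(x)$ is bounded below by a positive constant outside these neighborhoods. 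Splitting the integral accordingly and computing the logarithmic singularity,
\begin{equation*}
\phi_1(t)\ \le\ A+B\,\ln\!\bigl(1/(1-M_0\,\eta(t))\bigr)
\end{equation*}
for constants $A,B>0$ depending only on $M_0$ and $f_0''(x_i^*)$, valid as long as $\eta(t)<\eta_*=1/M_0$.

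With this control on $\phi_1$, I would argue that $\eta$ reaches $\eta_*$ in finite time. From $\dot\eta=\phi_1^{-2}$ and the above,
\begin{equation*}
\dot\eta(t)\ \ge\ \frac{1}{\bigl(A+B\ln(1/(1-M_0\eta(t)))\bigr)^2}.
\end{equation*}
Separating variables and substituting $u=1-M_0\eta$, the resulting bound $t\le \tfrac{1}{M_0}\int_0^{1}\bigl(A+B\ln(1/u)\bigr)^2\,du<\infty$ forces the maximal time $t^*$ on which $\eta<\eta_*$ to be finite, with $\eta(t)\nearrow\eta_*$. At this $t^*$, the logarithmic upper bound gives
\begin{equation*}
\phi_1(t)\,(1-M_0\eta(t))\ \le\ \bigl(A+B\ln(1/(1-M_0\eta(t)))\bigr)(1-M_0\eta(t))\ \longrightarrow\ 0,
\end{equation*}
so $\gamma_x(t,x_i^*)\to+\infty$, and taking logarithms yields \eqref{blowup}.

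The main obstacle is the asymptotic bookkeeping in the middle step: I need the upper bound on $\phi_1$ to be strictly slower than $(1-M_0\eta)^{-1}$ as $\eta\nearrow\eta_*$, which is precisely what the hypothesis $f_0''(x_i^*)\ne 0$ delivers (yielding the logarithmic, rather than power-law, divergence). Care is also required to include the contribution of the $\rho_0(x)g(t)$ term correctly; using $-\rho_0 g\ge 0$ is what makes the bound clean, and one should double-check that this inequality is indeed the right direction, as the sign of $g$ is controlled by $\dot g=-\phi_1^{-2}\int_0^t\phi_1\,ds\le 0$ together with $g(0)=0$.
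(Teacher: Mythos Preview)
Your proposal is correct and follows essentially the same route as the paper's proof: both use the Dirichlet condition $\rho_0(x_i^*)=0$ to reduce \eqref{jacobian} at the boundary, bound $\phi_1$ above by $\int_0^1(1-\eta f_0')^{-1}\,dx$ via the sign of $g$, extract a logarithmic asymptotic from the Taylor expansion enabled by $f_0''(x_i^*)\neq 0$, and then integrate $\dot\eta=\phi_1^{-2}$ to force $\eta\nearrow\eta_*$ in finite time. The only cosmetic difference is that you invoke the exact identity $\gamma_x(t,x_i^*)=[\phi_1(1-M_0\eta)]^{-1}$ directly, whereas the paper passes through the general lower bound \eqref{jacineq} before specializing to $x=x_i^*$; the resulting estimate \eqref{jacineq3} and the time bound \eqref{time1} match yours.
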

\begin{proof}
Suppose $0\leq\eta<\eta_*=1/M_0$ for all $t\in\Sigma=[0,t^*)$ and some $0<t^*\leq+\infty$. For simplicity, assume $f_0'(x)$ attains its largest value $M_0$ only at $x^*=0$ with $f_0''(0)\neq0$. Further, suppose $\rho_0(x)\geq0$ for all $x\in[0,1]$. First we show that $\gamma_x(t,0)\to+\infty$ as $\eta\nearrow \eta_*$. Then we prove that as $\eta$ approaches $\eta_*$, $t$ approaches a finite time $t^*>0$.

\vskip.1in

For all $t\in\Sigma$ and $x\in[0,1]$, \eqref{jacobian}, \eqref{posden} and \eqref{phiineq} imply that
\begin{equation}
\label{jacineq}
\gamma_x(t,x)\geq\left(\int_0^1{\frac{dx}{1-\eta(t)f_0'(x)}}\right)^{-1}\left(\frac{1}{1-\eta(t)f_0'(x)-\rho_0(x)g(t)}\right)>0,
\end{equation}
so that
\begin{equation}
\label{jacineq2}
\gamma_x(t,0)\geq\left(\int_0^1{\frac{dx}{1-\eta(t)f_0'(x)}}\right)^{-1}\left(\frac{1}{1-\eta(t)M_0}\right)
\end{equation}
for all $t\in\Sigma$. We need to estimate the integral term in \eqref{jacineq2}. Smoothness of $f_0$ implies, via a Taylor expansion about $x=0$, that
\begin{equation}
\label{eq:againdiri}
\begin{split}
\epsilon+M_0-f_0'(x)\sim\epsilon+\left|C_1\right|x
\end{split}
\end{equation}
for $0\leq x\leq r\leq1$,\, $C_1=f_0''(0)<0$\, and some $\epsilon>0$. In \eqref{eq:againdiri} we use the notation
\begin{equation}
\label{eq:simexplanation}
h(x) \sim L + w(x),
\end{equation}
valid for $0\leq|x-\beta|\leq s$, to mean that there exists a function $v(x)$ defined on $(\beta-r,\beta+r)$ such that
\begin{equation}
\label{eq:simexplanation2}
h(x)-L=w(x)(1+v(x))\,\,\,\,\,\,\,\,\,\,\,\,\text{where}\,\,\,\,\,\,\,\,\,\,\,\,\lim_{x\rightarrow\beta}v(x)=0.
\end{equation}
Using \eqref{eq:againdiri} we obtain the estimate
\begin{equation}
\label{eq:app}
\begin{split}
\int_{0}^{r}{\frac{dx}{\epsilon+M_0-f_0'(x)}}\sim\int_{0}^{r}{\frac{dx}{\epsilon+\left|C_1\right|x}}=-\frac{1}{\left|C_1\right|}\ln\epsilon
\end{split}
\end{equation}
for $\epsilon>0$ small. If we now set $\epsilon=\frac{1}{\eta}-M_0$ into (\ref{eq:app}), we see that for $\eta_*-\eta>0$ small,
\begin{equation}
\label{eq:intest1}
\begin{split}
\int_0^1{\frac{dx}{1-\eta(t)f_0'(x)}}\sim-\frac{M_0}{\left|C_1\right|}\ln(\eta_*-\eta),
\end{split}
\end{equation}
which we use on \eqref{jacineq2} to obtain
\begin{equation}
\label{jacineq3}
\gamma_x(t,0)\geq\left(\int_0^1{\frac{dx}{1-\eta(t)f_0'(x)}}\right)^{-1}\left(\frac{1}{1-\eta(t)M_0}\right)\sim -\frac{C}{(\eta_*-\eta)\ln(\eta_*-\eta)}
\end{equation}
for $C$ a positive constant. The above implies that
$$\gamma_x(t,0)\to+\infty\qquad\quad\text{as}\quad\qquad \eta\nearrow\eta_*.$$
Last we establish the existence of a finite blowup time
\begin{equation}
\label{t*gen}
t^*\equiv\lim_{\eta\nearrow\eta_*}t(\eta)>0.
\end{equation}
For $\eta_*-\eta>0$ small, \eqref{eta}, \eqref{phiineq} and \eqref{eq:intest1} yield
\begin{equation}
\label{time0}
0<\frac{dt}{d\eta}\leq\left(\int_0^1{\frac{dx}{1-\eta(t)f_0'(x)}}\right)^{2}\sim C\ln^2(\eta_*-\eta).
\end{equation}
Consequently,
\begin{equation}
\label{time1}
\begin{split}
0<t^*-t&\leq(\eta_*-\eta)\left[1+\left(\ln(\eta_*-\eta)-1\right)^2\right],
\end{split}
\end{equation}
the right-hand side of which vanishes as $\eta\nearrow \eta_*$. In fact, using \eqref{eta}, \eqref{phiineq} and Lemma \ref{phipos}, it follows that
\begin{equation}
\label{t}
\begin{split}
t(\eta)\leq\int_0^{\eta}{\left(\int_0^1{\frac{dx}{1-\mu f_0'(x)}}\right)^2d\mu}
\end{split}
\end{equation}
for $0\leq\eta<\eta_*$. Inequality \eqref{time1} then implies that the integral in \eqref{t} remains finite as $\eta\nearrow\eta_*$ and, further, that an upper-bound for the blowup time \eqref{t*gen} is
\begin{equation}
\label{upperbound}
\begin{split}
0<t^*\leq\lim_{\eta\nearrow\eta_*}\int_0^{\eta}{\left(\int_0^1{\frac{dx}{1-\mu f_0'(x)}}\right)^2d\mu}.
\end{split}
\end{equation}
\end{proof}
\hfill $\square$


\begin{remark}
\label{noblow}
A simple choice of initial data to which the blowup result in Theorem \ref{blowup1} applies is $f_0(x)=x(1-x)$ and $\rho_0(x)=\sin^2(2\pi x)$. In this case \eqref{upperbound} yields $\pi^2/6\sim 1.65$ as an upper-bound for the blowup time of $\gamma_x$ at $x^*=0$. Clearly, this choice of $f_0(x)$ does not satisfy the periodic boundary conditions \eqref{pbc}, but if instead we choose the mean-zero function $f_0(x)=\sin(2\pi x)$ and the same $\rho_0$ as above, then for $x_i^*=0, 1$,\, we have that $\gamma_x(t,x^*_i)\to+\infty$\,
no slower than $(\eta_*-\eta)^{-1/2}$ as $\eta\nearrow\eta_*=1/(2\pi)$. However, for this choice of $f_0$, \eqref{time1} now becomes
\begin{equation}
\label{noupperbound}
\begin{split}
0<t^*-t\leq-\ln(\eta_*-\mu)\big|_{\eta}^{\eta_*}=+\infty.
\end{split}
\end{equation}
Thus, for the latter choice of initial data we fail to establish a finite upper-bound for the blowup time. As opposed to the case $f_0(x)=x(1-x)$, in which finite-time blowup occurs, we remark that \eqref{noupperbound} is a result of $x_i^*=0, 1$ now being inflection points of $f_0(x)=\sin(2\pi x)$. A similar result follows when at least one of the $x_i^*$ is an inflection point of $f_0$. In \S\ref{global} we elaborate on the above and discuss the effects that an initial vorticity which vanishes at the point(s) $x_i^*$ may have on the regularity of solutions of \eqref{reduced}.
\end{remark}

\begin{remark}
Since $f_0''(x_i^*)\neq 0$ is required for finite-time blowup, the assumption that $f_0'$ attains its greatest value $M_0$ only at boundary point(s) $x_i^*$ is needed for $f_0$ to be smooth; otherwise, if $x_i^*\in(0,1)$, then $f_0''(x_i^*)\neq0$ will imply a jump-discontinuity of finite magnitude in $f_0''(x)$ through $x_i^*$. Regularity criteria for non-smooth initial velocities, including piecewise-linear functions and maps with ``cusps'' and/or ``kinks'' on their graphs, can be studied via an argument similar to that used in the proof of Theorem \ref{blowup1} (see, e.g., \cite{Sarria} \cite{sarria1}).
\end{remark}


Lastly, we establish finite-time blowup of the vorticity \eqref{vorticity} under the setting of Theorem \ref{blowup1}.

\begin{corollary}
\label{blowup2}
Suppose the assumptions in Theorem \eqref{blowup1} hold. Then there exists a finite time $t^*>0$ such that the vorticity \eqref{vorticity} blows up as $t\nearrow t^*$. Further, if $f_0'(x)$ attains its maximum at both endpoints, then this blowup is two-sided.
\end{corollary}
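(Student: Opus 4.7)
My plan is to derive an explicit representation for $f_{xx}$ along the Lagrangian flow, then extract its singular behavior from the asymptotics already established in the proof of Theorem \ref{blowup1}. Starting from the characteristic identity \eqref{jacobian0}, one has $f_x(t,\gamma(t,x))=\dot\gamma_x/\gamma_x=-\dot\omega/\omega$ where $\omega=\gamma_x^{-1}$; combining with \eqref{gen0} yields
\begin{equation*}
f_x(t,\gamma(t,x))=-\frac{\dot\phi_1(t)}{\phi_1(t)}+\frac{\dot\eta(t)f_0'(x)+\rho_0(x)\dot g(t)}{P(t,x)},\qquad P(t,x)=1-\eta(t)f_0'(x)-\rho_0(x)g(t).
\end{equation*}
Differentiating with respect to the Lagrangian label $x$ and dividing by $\gamma_x=[\phi_1 P]^{-1}$ produces a closed formula for $f_{xx}(t,\gamma(t,x))$ in terms of $\phi_1,\eta,g,\dot g,f_0',f_0'',\rho_0,\rho_0'$, and $P$.

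The next step is evaluation at a boundary maximizer $x_i^*\in\{0,1\}$. Because of the Dirichlet conditions and \eqref{fixed}, we have $\gamma(t,x_i^*)=x_i^*$ and $\rho_0(x_i^*)=0$, so $P(t,x_i^*)=1-\eta(t)M_0$ — exactly the factor whose vanishing drove the blowup in Theorem \ref{blowup1}. Using $\dot\eta=\phi_1^{-2}$ the formula simplifies to
\begin{equation*}
f_{xx}(t,x_i^*)=\frac{f_0''(x_i^*)}{\phi_1(t)}+\phi_1(t)\rho_0'(x_i^*)\dot g(t)+\frac{M_0}{\phi_1(t)}\cdot\frac{\eta(t)f_0''(x_i^*)+\rho_0'(x_i^*)g(t)}{1-\eta(t)M_0},
\end{equation*}
in which the third term carries a polynomial singularity while the first two remain (at worst) logarithmic.

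Combining $\phi_1(t)\lesssim -\ln(\eta_*-\eta)$ from \eqref{eq:intest1} with $1-\eta(t)M_0\sim M_0(\eta_*-\eta)$ shows that $\phi_1(1-\eta M_0)$ vanishes like $-(\eta_*-\eta)\ln(\eta_*-\eta)\to 0^+$, so the singular factor $1/[\phi_1(1-\eta M_0)]$ diverges. Provided the numerator $\eta(t)f_0''(x_i^*)+\rho_0'(x_i^*)g(t)$ tends to a nonzero limit — which under the hypothesis $f_0''(x_i^*)\neq 0$ holds generically and can always be arranged by, e.g., the choice $\rho_0'(x_i^*)=0$ — we conclude $|f_{xx}(t,x_i^*)|\to+\infty$ as $t\nearrow t^*$, which by \eqref{vorticity} is the desired blowup of the vorticity.

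For the two-sided statement, a left-endpoint maximum of $f_0'$ forces $f_0''(0)\le 0$, and $f_0''(0)\neq 0$ then gives $f_0''(0)<0$; symmetrically, a right-endpoint maximum gives $f_0''(1)>0$. Feeding these opposite signs into the leading term produces $f_{xx}(t,0)\to-\infty$ and $f_{xx}(t,1)\to+\infty$, so $-yf_{xx}$ diverges to $+\infty$ at $x=0$ while diverging to $-\infty$ at $x=1$, i.e.\ two-sided blowup. The main technical obstacle is to verify that $g$ and $\dot g$ are indeed subdominant; differentiating \eqref{phi}ii) yields $\dot g=-\phi_1^{-2}\int_0^t\phi_1(s)\,ds$, and combined with the logarithmic bound on $\phi_1$ this keeps both $g$ and $\dot g$ bounded (or at worst logarithmic) near $t^*$, well within the polynomial singularity of the leading term.
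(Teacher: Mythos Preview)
Your overall strategy matches the paper's: derive a formula for $f_{xx}(t,\gamma(t,x))$ from \eqref{gen0}--\eqref{jacobian} and evaluate at the boundary maximizers $x_i^*$. The displayed formula you obtain for $f_{xx}(t,x_i^*)$ is correct. However, there is a genuine gap where you write ``provided the numerator $\eta(t)f_0''(x_i^*)+\rho_0'(x_i^*)g(t)$ tends to a nonzero limit --- which under the hypothesis $f_0''(x_i^*)\neq 0$ holds generically and can always be arranged by, e.g., the choice $\rho_0'(x_i^*)=0$.'' The Corollary is asserted under exactly the hypotheses of Theorem \ref{blowup1}; you are not free to impose extra restrictions on $\rho_0'$, so as written your argument establishes blowup only for a subclass of the admissible data. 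The handwaved claim that $g$ and $\dot g$ are ``bounded (or at worst logarithmic)'' is likewise not justified --- you have only an \emph{upper} bound on $\phi_1$, not a lower one.

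The missing ingredient is a sign argument that you partially invoke for the two-sided statement but never apply to close the main claim. Since $\rho_0\ge 0$ and $\rho_0(0)=\rho_0(1)=0$ (Dirichlet), necessarily $\rho_0'(0)\ge 0$ and $\rho_0'(1)\le 0$; since $g(t)<0$, $\dot g(t)<0$, $\eta(t)>0$ and $\phi_1(t)>0$, each of the three terms in your decomposition is $\le 0$ at $x_0^*=0$ (respectively $\ge 0$ at $x_1^*=1$), and the third term has numerator bounded away from zero by $\eta f_0''(x_i^*)$. So no genericity assumption and no asymptotics on $g,\dot g$ are needed. The paper organizes this more transparently by writing
\[
f_{xx}(t,\gamma)=h(t,x)\,\gamma_x,\qquad h=f_0''-\rho_0'\!\int_0^t\!\phi_1\,ds+(\rho_0'f_0'-\rho_0 f_0'')\!\int_0^t\!\eta\phi_1\,ds,
\]
which at the endpoints collapses to $f_{xx}(t,x_i^*)=[\,f_0''(x_i^*)+M_0\rho_0'(x_i^*)g^*(t)\,]\gamma_x(t,x_i^*)$ with $g^*(t)\le g(t)<0$. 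The sign structure above then gives $f_{xx}(t,0)\le f_0''(0)\gamma_x(t,0)$ and $f_{xx}(t,1)\ge f_0''(1)\gamma_x(t,1)$, and the divergence of $\gamma_x(t,x_i^*)$ already established in Theorem \ref{blowup1} finishes the proof without any further estimates on $\phi_1$, $g$, or $\dot g$.
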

\begin{proof}
Differentiating \eqref{jacobian} with respect to time and using \eqref{jacobian0} yields
\begin{equation}
\label{nasty1}
f_x(t,\gamma(x,t))=\phi_1(t)^{-2}\left(\frac{f_0'(x)-\rho_0(x)\int_0^t{\phi_1 ds}}{1-\eta(t)f_0'(x)-\rho_0(x)g(t)}\right)-\frac{\dot\phi_1}{\phi_1}.
\end{equation}
If we now differentiate the above in space and use \eqref{jacobian} we find that
\begin{equation}
\label{nasty2}
f_{xx}(t,\gamma(t,x))=h(t,x)\,\gamma_x
\end{equation}
for
\begin{equation}
\label{f}
h=f_0''-\rho_0'\int_0^t{\phi_1\, ds}+\left(\,\rho_0'f_0'-\rho_0f_0''\,\right)\int_0^t{\eta\phi_1\,ds}.
\end{equation}
Without loss of generality, assume $f_0'(x)$ achieves its maximum $M_0$ at both endpoints $x_0^*=0$ and $x_1^*=1$. Then setting $x=x_i^*$, $i=0, 1$, in \eqref{nasty2}-\eqref{f} and using \eqref{fixed}, gives
\begin{equation}
\label{fxxeq0}
f_{xx}(t,x_i^*)=\left(f_0''(x_i^*)+M_0\,\rho_0'(x_i^*)g^*(t)\right)\gamma_x(t,x_i^*)
\end{equation}
with
$$g^*(t)=\int_0^t{\eta(s)\phi_1(s)\,ds}-\eta_*\int_0^t{\phi_1(s)\,ds}.$$
Suppose $0\leq\eta<\eta_*$. Then by Lemma \ref{phipos},
\begin{equation}
\label{negg*}
g^*(t)\leq g(t)<0.
\end{equation}
Now, since $\rho_0(x)\nequiv 0$ is nonnegative and vanishes at the endpoints, then  $\rho_0'(0)\geq0$ and $\rho_0'(1)\leq0$. Moreover, since $M_0>0$ is the largest value attained by $f_0'(x)$ and $f_0''(x_i^*)\neq0$, then $f_0''(0)<0$, while $f_0''(1)>0$. Consequently, using \eqref{negg*} we set $i=0$ and respectively $i=1$ in \eqref{fxxeq0} to find
\begin{equation}
\label{twosided0}
f_{xx}(t,0)\leq f_0''(0)\,\gamma_x(t,0),\qquad\qquad
f_{xx}(t,1)\geq f_0''(1)\,\gamma_x(t,1).
\end{equation}
By letting $t$ approach the finite time $t^*>0$ established in Theorem \ref{blowup1}, we conclude that
\begin{equation}
\label{twosided}
f_{xx}(t,0)\to-\infty\qquad\quad\text{and}\qquad\quad
f_{xx}(t,1)\to+\infty.
\end{equation}
\end{proof}
\hfill $\square$

\begin{remark}
The issue of solutions of hydrodynamical-related models diverging at every point in their spatial domain and/or in only one direction of infinity has been studied previously (see e.g. \cite{grundy} \cite{constantin2} \cite{okamoto1} \cite{sarria2}). In the case where $M_0$ is attained at both boundary points (so that the two-sided blowup in \eqref{twosided} takes place), Corollary \ref{blowup2} gives conditions on the initial data which imply the existence of solutions of \eqref{reduced} whose slopes cannot blowup only towards one direction of infinity at every point in their domain.
\end{remark}


%
%

\section{An Infinite Family of Exact Global Solutions Spanning from Zero Initial Velocities}
\label{global}

The question of finite-time blowup in \eqref{reduced} from nontrivial initial velocities having a local profile different from that described in Theorem \ref{blowup1} is still open (see Remark \ref{noblow}). To help clarify this issue, in this Section we use an argument similar to that in \cite{Childress} to construct a family of global solutions to \eqref{reduced}. Our findings indicate that an initial nontrivial vorticity which vanishes at, at least, one of the $x_i^*$ (the points where $f_0'$ attains its maximum) is a necessary condition to arrest finite-time blowup. This, in turn, would imply that a boundary-induced singularity, possible only under the set-up of Theorem \ref{blowup1}, is the correct underlying mechanism for solutions of \eqref{reduced} to blowup from nontrivial smooth $f_0$.

\vskip .05in

For a constant $N_0\in\mathbb{R}^+\cup\{0\}$, we will consider initial data\, $\rho_0(x)=\sin^2\left(2\pi x\right)$\, and\, $f_0'(x)=-N_0\cos(4\pi x)$. Note that for $N_0>0$, $f_0'$ attains its greatest, positive value at points $x_i^*$ located in the interior, with all the $x_i^*$ being inflection points of $f_0$. As opposed to the finite-time blowup in Theorem \ref{blowup1}, we will find that solutions corresponding to this choice of initial data persist for all time. This leads us to conclude that the vanishing of the initial vorticity $f_0''(x)$ at $x_i^*$ is responsible for suppressing the blowup. Briefly, the family of solutions we construct features exponential decay of $\rho$ to zero as time goes to infinity, while $f_x$ convergences to steady states. The latter implies that both the velocity and the vorticity are uniformly bounded in time. Further, $\gamma_x$ grows exponentially at a finite number of points in $[0,1]$ but decays, also exponentially, everywhere else\footnote[2]{But the locations where it grows exponentially coincide with the points where $\rho_0(x)$ vanishes, which is the reason why $\rho$ only decays.}. So even though the solutions we construct persist for all time, the exponential growth of $\gamma_x$ at a finite number of locations and exponential decay everywhere else could be an indication that there are solutions of \eqref{reduced} which blowup everywhere in $[0,1]$ in both directions of infinity.


\vskip.1in

Set
\begin{equation}
\label{data}
\rho_0(x)=\sin^2\left(2\pi x\right).
\end{equation}
We look for a particular solution of
\begin{equation}
\label{otherode2}
\ddot\mu(t,x)+I(t)\mu(t,x)=\rho_0(x)
\end{equation}
of the form
\begin{equation}
\label{globalform}
\mu(t,x)=\mu_1(t)+\rho_0(x)\mu_2(t),
\end{equation}
with $\mu(0,x)\equiv1$ and $\dot\mu(0,x)=-f_0'(x)$. In \eqref{globalform}, $\mu_1$ and $\mu_2$ satisfy
\begin{equation}
\label{odes}
\ddot\mu_1+I(t)\mu_1=0,\qquad\quad\qquad\quad \ddot\mu_2+I(t)\mu_2=1
\end{equation}
with $\mu_1(0)=1$ and $\mu_2(0)=0$, which are required for $\mu(0,x)\equiv1$ to hold. Now, due to \eqref{meanone},
\begin{equation}
\label{globalmean}
1\equiv\int_0^1{\frac{dx}{\mu_1(t)+\rho_0(x)\mu_2(t)}}.
\end{equation}
Then \eqref{data} yields the relation
\begin{equation}
\label{relation}
\mu_2=\frac{1}{\mu_1}-\mu_1.
\end{equation}
Note that differentiating the above, setting $t=0$ and using $\mu_1(0)=1$, gives $\dot\mu_2(0)=-2\dot\mu_1(0)$. Thus, since
\begin{equation}
\label{icglobal}
-f_0'(x)=\dot\mu(0,x)=\dot\mu_1(0)+\rho_0(x)\dot\mu_2(0),
\end{equation}
if we choose $\dot\mu_1(0)=0$, then $f_0'(x)\equiv0$. So for the time being we simply set
\begin{equation}
\label{N0}
\dot\mu_1(0)=N_0\in\mathbb{R}^+\cup\{0\}.
\end{equation}
Next, using \eqref{relation} to eliminate $I(t)$ in \eqref{odes}, we obtain, after simplification,
\begin{equation}
\label{childress}
(\ln\mu_1)^{\ddot{}}=-\frac{1}{2}\mu_1.
\end{equation}
Then, dividing both sides of \eqref{childress} by $\mu_1$, differentiating in time, and setting
$$N(t)=\frac{\dot\mu_1}{\mu_1}$$
leads to
\begin{equation}
\label{childress2}
2\dot N=N^2-C_0
\end{equation}
for $C_0=1+N_0^2$. Solving \eqref{childress2} yields
\begin{equation}
\label{mu1}
\mu_1(t)=C_0\left[\sqrt{C_0}\cosh\left(\frac{\sqrt{C_0}}{2}t\right)-N_0\sinh\left(\frac{\sqrt{C_0}}{2}t\right)\right]^{-2},
\end{equation}
from which a solution of \eqref{otherode2} can be obtained via \eqref{globalform}, \eqref{relation} and \eqref{mu1}. Note that $f_0'(x)=-N_0\cos(4\pi x)$. Consequently, if we use Dirichlet boundary conditions, or assume $f_0(x)$ to be odd through $x=0$, or simply enforce the mean-zero condition \eqref{meanzero} for periodic initial data, then for the simplest case $N_0=0$, we have that $f_0(x)\equiv0$ and
\begin{equation}
\label{mu1N00}
\gamma_x(t,x)=\left[\text{sech}^2\left(\frac{t}{2}\right)+\frac{1}{2}\left(3+\cosh t\right)\tanh^2\left(\frac{t}{2}\right)\rho_0(x)\right]^{-1}.
\end{equation}
In this case, the global solution corresponding to $f_0(x)\equiv0$ and $\rho_0(x)=\sin^2(2\pi x)$ is obtained from \eqref{jacobian0} and \eqref{rho} as
\begin{equation}
\label{mu1N000}
f_x(t,x)=\cos(4\pi x)\tanh\left(\frac{t}{2}\right),\qquad\qquad
\rho(t,x)=\frac{(1+\cosh t)\rho_0(x)}{2+\left(3+\cosh t\right)\sinh^2\left(\frac{t}{2}\right)\rho_0(x)}.
\end{equation}
More generally, for $N_0>0$, $f_0'(x)=-N_0\cos(4\pi x)$ attains its maximum at $x_1=1/4$ and $x_2=3/4$, with zero initial vorticity at both of these locations. Define $\Lambda\equiv\{0,1/2,1\}$, the zeros of $\rho_0(x)=\sin^2(2\pi x)$. Then as $t\to+\infty$, $\gamma_x(t,x)\to+\infty$ on $\Lambda$ but vanishes everywhere else, $\rho(t,\gamma(t,x))$ vanishes exponentially for all $x\in[0,1]\backslash\Lambda$ and is identically zero on $\Lambda$, whereas, for $x\in[0,1]\backslash\Lambda$ or respectively $x\in\Lambda$, $f_x(t,\gamma(t,x))$ converges to $\sigma(N_0)$ or $-\sigma(N_0)$, where
\begin{equation}
\label{uxconv}
\sigma(N_0)=\frac{1+N_0^2-N_0\sqrt{1+N_0^2}}{N_0-\sqrt{1+N_0^2}}.
\end{equation}
We remark that the behavior described above has been observed in 2d Boussinesq with diffusion (\cite{Li}) and stagnation-point form solutions of the incompressible 2d Euler equations (\cite{Sarria}).


\section{Conclusions}
\label{conclusions}

We presented a local well-posedness result and a regularity criterion for solutions of \eqref{reduced}-\eqref{dbc}, as well as \eqref{reduced} with \eqref{pbc} and mean-zero \eqref{meanzero}. The former can be viewed as an analogue of the well-known regularity criteria for the inviscid 2d Boussinesq equations in terms of the gradient of the velocity field. Using Dirichlet boundary conditions \eqref{dbc}, we also established general criteria for finite-time blowup (from smooth nontrivial initial data) of the time integral of $f_x(t,x)$ at the boundary and, as a consequence, proved one or two-sided blowup in the vorticity \eqref{vorticity}. Assuming $f_0'$ attains its greatest value $M_0>0$ only at the boundary, our blowup criteria makes use of the local profile of $f_0$, as characterized by the non-vanishing of the initial vorticity at the boundary, and a non-negative initial temperature $\rho_0$. Lastly, we constructed an infinite family of solutions to \eqref{reduced} that illustrates how the vanishing of the initial vorticity at, at least, one of the points where $M_0$ is attained (be this point located at the boundary or in the interior), may suppress finite-time blowup. If we restrict the class of initial data to smooth functions satisfying the Dirichlet boundary condition \eqref{dbc}, or periodic boundary condition \eqref{pbc} with mean-zero \eqref{meanzero}, then our results indicate that only \eqref{dbc} may induce finite-time blowup.

\section*{Acknowledgments}

The Authors would like to thank the Referee for helpful suggestions. A. Sarria would like to thank Prof. Stephen C. Preston for discussions. J. Wu was partially supported by NSF grant DMS1209153
and by the AT$\&$T Foundation at Oklahoma State University.


\begin{thebibliography}{99}



\bibitem{cao1}
C. Cao, S. Ibrahim, K. Nakanishi and E.S. Titi, Finite-time blowup for the inviscid primitive equations of oceanic and atmospheric dynamic, Commun. Math Phys (to appear), arXiv:1210.7337

\bibitem{chae1}
D. Chae and H.-S. Nam, Local existence and blow-up criterion for the Boussinesq equations, Proc. Roy. Soc. Edinburgh Sect. A 127, (1997), 935--946.


\bibitem{Childress} S. Childress, G.R. Ierley, E.A. Spiegel and W.R. Young, Blow-up of unsteady
two-dimensional Euler and Navier-Stokes solutions having stagnation-point form, J Fluid Mech 203, (1989), 1--22.

\bibitem{Aconstantin}
A. Constantin and M. Wunsch, On the inviscid Proudman-Johnson equation, Proc. Japan Acad Ser A Math Sci, 85 (7), (2009), 81--83.


\bibitem{constantin1}
P. Constantin and C.R. Doering, Infinite Prandtl number convection, J. Statistical
Physics 94, (1999), 159--172.

\bibitem{constantin2}
P. Constantin, The Euler equations and non-local conservative Riccati equations, Inter Math Res Notice, 9, (2000), 455--465.

\bibitem{drazin}
P. Drazin and W. Reid, Hydrodynamic Stability, Cambridge University Press, 1981.

\bibitem{gibbon1}
J. D. Gibbon, A. Fokas and C. R. Doering, Dynamically stretched vortices as solutions of the 3D Navier–Stokes
equations, Physica D 132, (1999), 497-–510.

\bibitem{gibbon2}
K. Ohkitani and J. D. Gibbon, Numerical study of singularity formation in a class of Euler and Navier–Stokes flows, Phys Fluids 12, (2000), 3181-–94.

\bibitem{gibbon}
J. D. Gibbon, The three-dimensional Euler equations: Where do we stand?, Physica D 237, (2008), 1894--1904.

\bibitem{gill}
A.E. Gill, Atmosphere-Ocean Dynamics, Academic Press (London), 1982.

\bibitem{grundy}
R. E. Grundy and R. McLaughlin, Global blow-up of separable solutions of the
vorticity equation, IMA J Appl Math 59, (1997), 287--307.


\bibitem{Kukavica}
I. Kukavica, N. Masmoudi, V. Vicol, T.K. Wong, On the local well-posedness of the Prandtl and the hydrostatic Euler equations with multiple monotonicity regions, SIAM J Math Anal, 46 (6), (2014), 3865--3890.

\bibitem{Li}
H. Li, R. Pan, and W. Zhang, Initial boundary value problem for 2D Boussinesq equations with temperature-dependent heat diffusion, Submitted.


\bibitem{Luo}
G. Luo and T. Hou, Potentially singular solutions of the 3D incompressible Euler equations, Proc Nat Acad Sci, 111 (36), (2014), 12968--12973.

\bibitem{majdabertozzi}
A. Majda and A. Bertozzi, \emph{Vorticity and incompressible flow}, Cambridge University Press, Cambridge, (2002), 136--146.

\bibitem{majda}
A.J. Majda, Introduction to PDEs and Waves for the Atmosphere and Ocean, Courant Lecture Notes in Mathematics 9, AMS/CIMS, 2003.


\bibitem{ohkitani1}
K. Ohkitani and J. D. Gibbon, Numerical study of singularity formation in a class of Euler and Navier-Stokes flows, Phys Fluids, 12 (12), (2000), 3181--3194.

\bibitem{okamoto1}
H. Okamoto and J. Zhu, Some similarity solutions of the Navier-Stokes equations and related topics, Taiwanese J Math 4, (2000), 65--103.

\bibitem{proudman}
I. Proudman and K. Johnson, Boundary-layer growth near a rear stagnation point, J Fluid Mech 12, (1962), 161--168.

\bibitem{sarria1}
A. Sarria and R. Saxton, Blow-up of solutions to the generalized inviscid Proudman-Johnson equation,  J Math Fluid Mech, 15 (3), (2013), 493--523.

\bibitem{sarria2}
A. Sarria and R. Saxton, The role of initial curvature in solutions to the generalized inviscid Proudman-Johnson equation, Q Appl Math, 73 (1), (2015), 55-91.


\bibitem{Sarria}
A. Sarria, Regularity of stagnation point-form solutions of the two-dimensional Euler equations, Differential and Integral Equations, 28 (3-4), (2015), 239--254.

\bibitem{saxton}
R. Saxton and F. Tiglay, Global existence of some infinite energy solutions for a perfect incompressible fluid, SIAM J Math Anal 4, (2008), 1499--1515.

\bibitem{stuart2}
J. T. Stuart, Nonlinear Euler partial differential equations: singularities in their solution, Proc Symp in Honour of C. C. Lin, Singapore, World Scientific, (1987), 81-–95.

\bibitem{stuart}
J.T. Stuart, Singularities in three-dimensional compressible Euler flows with vorticity, Theoret Comput Fluid Dyn 10, (1998), 385-391.

\bibitem{weinan}
W. E. and C. Shu, Small-scale structures in Boussinesq convection, Phys Fluids 6,
(1994), 49--58.

\bibitem{Wong}
T. K. Wong, Blowup of Solutions of the Hydrostatic Euler Equations, P AM MATH SOC, 143 (3), 2015, 1119--1125.

\bibitem{Wu}
J. Wu, The 2D Boussinesq equations with partial or fractional dissipation, Lectures on the analysis of nonlinear partial differential equations, Morningside Lectures in Mathematics, Int Press, Somerville, MA, 2014, in press.


\end{thebibliography}
\end{document}